\newtheorem{theorem}{Theorem}
\newtheorem{lemma}[theorem]{Lemma}
\theoremstyle{definition}
\theoremstyle{parrafo}
\begin{document}

\title[]{A refinement of the inequality between arithmetic and geometric means}

\author{J. M. Aldaz}

\address{Departamento de Matem\'aticas,
Universidad  Aut\'onoma de Madrid, Cantoblanco 28049, Madrid, Spain.}
\email{jesus.munarriz@uam.es}

\thanks{2000 {\em Mathematical Subject Classification.} 26D15}

\thanks{The author was partially supported by Grant MTM2006-13000-C03-03 of the
D.G.I. of Spain}









\maketitle


\markboth{J. M. Aldaz}{AM-GM}

  In this note we present
a refinement of the AM-GM inequality, and then we estimate in a special
case the typical size of the improvement.

\begin{theorem}\label{AMGMr}  For $i=1,\dots, n$, let $x_i\ge 0$,
suppose that some $x_i > 0$, and let
$\alpha_i > 0$ satisfy $\sum_{i=1}^n \alpha_i = 1$. Then
\begin{equation}\label{refAMGM}
\exp\left(2  \left(1 - \frac{\sum_{i=1}^n \alpha_i x_i^{1/2}}
{\left(\sum_{i=1}^n \alpha_i x_i\right)^{1/2}}\right) \right)
\prod_{i=1}^n x_i^{\alpha_i}
\le
\sum_{i=1}^n \alpha_i x_i.
\end{equation}
\end{theorem}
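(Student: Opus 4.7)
The plan is to take logarithms, normalize, and reduce the claim to the one-variable inequality $\log z \le z-1$. First I would dispose of the degenerate case: if some $x_i=0$, then $\prod_i x_i^{\alpha_i}=0$ while the exponential factor is bounded (indeed, Cauchy--Schwarz with weights $\alpha_i$ gives $(\sum_i \alpha_i x_i^{1/2})^2 \le \sum_i \alpha_i x_i$, so the exponent is nonnegative, and clearly it is at most $2$) and the right-hand side is positive by hypothesis; hence I may assume $x_i>0$ for all $i$.

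Setting $A := \sum_{i=1}^n \alpha_i x_i>0$, taking the logarithm of (\ref{refAMGM}) converts it into the equivalent inequality
\begin{equation*}
2 - \frac{2\sum_i \alpha_i x_i^{1/2}}{A^{1/2}} + \sum_i \alpha_i \log x_i \;\le\; \log A.
\end{equation*}
Next I would introduce the scaled variables $z_i := x_i^{1/2}/A^{1/2}$, which automatically satisfy the single identity $\sum_i \alpha_i z_i^2 = 1$. Using $\sum_i \alpha_i x_i^{1/2}/A^{1/2} = \sum_i \alpha_i z_i$ together with
$\sum_i \alpha_i \log x_i = \log A + 2 \sum_i \alpha_i \log z_i$, the above display simplifies dramatically to
\begin{equation*}
\sum_{i=1}^n \alpha_i \bigl(1 - z_i + \log z_i\bigr) \;\le\; 0.
\end{equation*}

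This last inequality is immediate from the elementary estimate $\log z \le z-1$ (valid for all $z>0$), applied pointwise at $z=z_i$ and combined linearly with the nonnegative weights $\alpha_i$. Interestingly, the normalization $\sum_i \alpha_i z_i^2 = 1$ itself is not needed to complete the proof, only to see that the substitution is a genuine reduction; however, it dovetails with the equality case $z_i \equiv 1$, which forces all $x_i$ to be equal, the expected equality condition. The only real ``obstacle'' is the bookkeeping of the substitution: one has to notice that the factor $2$ in front of the exponent in (\ref{refAMGM}) is exactly what is required for the cancellation of $\log A$ and the emergence of the clean form $1-z_i+\log z_i$. Once this is seen, no further analytic input is required.
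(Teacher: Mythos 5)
Your proof is correct, and it takes a genuinely different route from the paper. The paper deduces the theorem from a stability version of H\"older's inequality proved elsewhere (Theorem 3.1 of \cite{Al}): applying that result with $s=1$ to the identity function on the discrete measure $\sum_i \alpha_i \delta_{x_i}$ gives the family of bounds $\left(\sum_i \alpha_i x_i^r\right)^{1/r} \le \left(\sum_i \alpha_i x_i\right)(1-2rc)^{1/r}$ for $0<r<1/2$, and the theorem follows by letting $r\downarrow 0$. You instead give a self-contained elementary argument: after handling the case of a vanishing $x_i$, you take logarithms, substitute $z_i = (x_i/A)^{1/2}$ with $A=\sum_i\alpha_i x_i$, and observe that the factor $2$ in the exponent makes $\log A$ cancel, reducing everything to $\sum_i \alpha_i(1-z_i+\log z_i)\le 0$, which follows termwise from $\log z \le z-1$. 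I checked the bookkeeping ($\sum_i\alpha_i\log x_i=\log A+2\sum_i\alpha_i\log z_i$ and $\sum_i\alpha_i x_i^{1/2}/A^{1/2}=\sum_i\alpha_i z_i$) and it is right; your remark that the normalization $\sum_i\alpha_i z_i^2=1$ is not actually needed at the final step is also correct, and it cleanly exhibits the equality case $z_i\equiv 1$. What your argument buys is transparency and independence from the cited reference; what the paper's route buys is the intermediate family of $L^r$ inequalities (one for each $r$), of which the stated theorem is only the limiting case, and a conceptual link to the variance interpretation developed afterwards.
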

The hypotheses  $n\ge 2$,
  $x_i\ge 0$ for $i=1,\dots, n$ and
$\alpha_i > 0$, $\sum_{i=1}^n \alpha_i = 1$ will be maintained
throughout  this note without further mention.

\begin{proof}
The following refinement of
a standard consequence of H\"older's inequality on a probability
space appears in \cite[Theorem 3.1]{Al}:
Let $0 < r < s/2 < \infty$, and let
$0\le f\in L^s$
satisfy
 $\|f\|_s > 0$. Then
\begin{equation}\label{refint}
  \|f\|_r
 \le  \|f\|_s \left[1 - \frac{2 r}{s}
\left(1 - \frac{\|f^{s/2}\|_1}{\|f^{s/2}\|_2}\right) \right]^{1/r}.
\end{equation}

From this, a well known argument yields the refinement of the
AM-GM indicated in the theorem.
Let
$f:[0,\infty) \to [0,\infty)$ be the identity $f(x) = x$, and let
$\mu := \sum_{i=1}^n \alpha_i \delta_{x_i}$, where $\delta_{x_i}$ denotes
the Dirac measure supported on $\{x_i\}$.
Writing
$$
c:= 1 - \frac{\sum_{i=1}^n \alpha_i x_i^{1/2}}
{\left(\sum_{i=1}^n \alpha_i x_i\right)^{1/2}}
$$
and setting $s = 1$ in (\ref{refint}), we get
\begin{equation}\label{r}
\left(\sum_{i=1}^n \alpha_i x_i^r\right)^{1/r}
\le \left(\sum_{i=1}^n \alpha_i x_i\right) \left( 1 -2r c\right)^{1/r}.
\end{equation}
Taking the limit as $r\downarrow 0$, it follows
(for instance, by L'H\^opital's rule) that
$$
\prod_{i=1}^n x_i^{\alpha_i} \le
\left(\sum_{i=1}^n \alpha_i x_i\right) e^{-2c}.
$$
\end{proof}

Regarding the meaning of (\ref{refint}),
it essentially
says that, for fixed $r$ and $s$, if the variance of $f^{s/2}/\|f^{s/2}\|_2$ is large,
then $\|f\|_r
<<  \|f\|_s$. For simplicity, set $s=1$, which is the case we  used. To see that
$1 - \int f^{1/2}/\left(\int f\right)^{1/2}$ is a measure of the
dispersion of $f^{1/2}$ about its mean value, and in fact,
comparable to the variance $\operatorname{Var}\left(f^{1/2}/\|f^{1/2}\|_2\right)$ of its normalization in $L^2$, observe first that
$\int f^{1/2}/\left(\int f\right)^{1/2} \le 1$, since $\operatorname{Var}\left(f^{1/2}\right) \ge 0$.
Now, for all
$t\in [0,1]$
\begin{equation}\label{eleminequ}
 2^{-1} (1 - t^2) = 2^{-1} (1 + t) (1-t) \le  1 - t \le  1 - t^2,
\end{equation}
so, setting $t = \|f^{1/2}\|_1/\|f^{1/2}\|_2$, we obtain\begin{equation}\label{var}
\frac{1}{2}
\operatorname{Var}\left(\frac{f^{1/2}}{\|f^{1/2}\|_2}\right) \le
1 - \frac{\|f^{1/2}\|_1}{\|f^{1/2}\|_2}
  \le
\operatorname{Var} \left(\frac{f^{1/2}}{\|f^{1/2}\|_2}\right).
 \end{equation}

Using  (\ref{var}), we see that (\ref{refAMGM}) entails the following
inequality:
\begin{equation}\label{VarAMGM}
\exp\left(\operatorname{Var}\left(\frac{x^{1/2}}{\|x^{1/2}\|_2}\right) \right) \prod_{i=1}^n x_i^{\alpha_i} \le
\sum_{i=1}^n \alpha_i x_i.
\end{equation}
 Thus, (\ref{VarAMGM})
gives a quantitative bound of the deviation from equality, in terms of the variance of   $x^{1/2}/\|x^{1/2}\|_2$; if the variance is large, so is
the difference between the AM and the GM.

\vskip .2cm

Next we ask ourselves how ``efficient"  the refinement in (\ref{refAMGM})
is.
 We
 study its average performance in the classical equal weights case, modified by the
change of variables $x_i = y_i^2$. The AM-GM inequality bounds the
GM-AM ratio by 1, always, while for $n >>1$ and after the said change
of variables, inequality (\ref{refAMGM}) gives a ``typical"
upper bound smaller than $0.82$ (with probability at least $1 - 1/n$).
We prove this next.

Let $\alpha_i = 1/n$
for $i=1,\dots, n$,
write $x_i = y_i^2$, and  set $y = (y_1,\dots,y_n)$,
where $y_i$ is now allowed to take negative values. In terms of the GM-AM ratio
(\ref{refAMGM}) becomes
\begin{equation}\label{r2}
\frac{\prod_{i=1}^n |y_i|^{1/n}}{\sqrt{\frac{1}{n} \sum_{i=1}^n y^2_i}}
\le
\exp\left(\frac{\frac{1}{n} \sum_{i=1}^n   |y_i|}
{\left(\frac{1}{n} \sum_{i=1}^n  y^2_i\right)^{1/2}}  -1\right).
\end{equation}
The $\ell_1^n$ and $\ell_2^n$ norms of the vector $y\in\mathbb{R}^n$
are given by $\|y\|_1 := \sum_{i=1}^n   |y_i|$ and
$\|y\|_2 := \sum_{i=1}^n   y_i^2$ respectively. Since both
sides of (\ref{r2}) are positive homogeneous functions of
degree zero (so replacing $y$ by $y/\|y\|_2$ does not change anything)
we may assume that $\|y\|_2 = 1$. Probability statements then mean
that $y$ is chosen at random (i.e., uniformly) from the euclidean unit sphere. Setting $\|y\|_2 = 1$,  inequality (\ref{r2}) becomes
\begin{equation}\label{r2simp}
\sqrt{n} \prod_{i=1}^n |y_i|^{1/n}
\le
\exp\left(n^{- 1/2} \|y\|_1 -1 \right).
\end{equation}

Denote by $P^{n-1}$ the normalized area, or Haar measure, on
 the euclidean unit sphere
$\mathbb{S}^{n-1}_2 = \{\|y\|_2=1\}\subset\mathbb{R}^n$.

\begin{theorem} \label{typical}
For all $n$ sufficiently  high and with probability  at least $1-1/n$
on $\mathbb{S}^{n-1}_2$, we have
\begin{equation}\label{lowering}
\exp\left(n^{- 1/2} \|y\|_1 -1 \right)
< 0.82.
\end{equation}
\end{theorem}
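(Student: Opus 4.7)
The plan is to reduce \eqref{lowering} to a concentration statement about the $\ell_1$-norm on the sphere, compute the relevant mean, and then apply Lévy's isoperimetric inequality. Taking logarithms, \eqref{lowering} is equivalent to
\[
n^{-1/2}\|y\|_1 < 1 + \log 0.82,
\]
and since $1 + \log 0.82 \approx 0.8015$ is strictly larger than $\sqrt{2/\pi} \approx 0.7979$, one has a small but positive slack
\[
\beta := 1 + \log 0.82 - \sqrt{2/\pi} > 0
\]
into which both the deviation of the mean and the concentration radius must be absorbed.

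First I would compute the expectation of $f(y) := n^{-1/2}\|y\|_1$ under $P^{n-1}$. By the coordinate symmetry of $P^{n-1}$, $\int\|y\|_1\,dP^{n-1} = n\int|y_1|\,dP^{n-1}$, and the classical marginal formula on the sphere gives
\[
\int |y_1|\, dP^{n-1}(y) = \frac{\Gamma(n/2)}{\sqrt{\pi}\,\Gamma((n+1)/2)}.
\]
Stirling then yields $\int f\,dP^{n-1} = \sqrt{2/\pi}\bigl(1 + O(1/n)\bigr)$. As a sanity check, realizing $y = g/\|g\|_2$ for a standard Gaussian $g\in\mathbb{R}^n$, the strong law applied separately to $\|g\|_1/n \to \sqrt{2/\pi}$ and $\|g\|_2^2/n \to 1$ gives $f(y)\to\sqrt{2/\pi}$ almost surely, confirming the target value.

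Next I would observe that $f$ is $1$-Lipschitz on $(\mathbb{S}^{n-1}_2,\|\cdot\|_2)$, since by Cauchy--Schwarz
\[
|f(y) - f(y')| \le \frac{\|y-y'\|_1}{\sqrt{n}} \le \|y-y'\|_2.
\]
Lévy's concentration-of-measure inequality on the sphere therefore yields absolute constants $C,c>0$ such that
\[
P^{n-1}\!\left(\left|f(y) - \textstyle\int f\, dP^{n-1}\right| > t\right) \le C\, e^{-c n t^2}.
\]
Setting $t_n := \sqrt{\log(Cn)/(cn)}$ makes this probability at most $1/n$, and $t_n\to 0$.

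Putting the pieces together: for $n$ sufficiently large, both the mean error $\int f\,dP^{n-1} - \sqrt{2/\pi} = O(1/n)$ and the radius $t_n = O(\sqrt{(\log n)/n})$ are smaller than $\beta/2$, so $\int f\,dP^{n-1} + t_n < 1 + \log 0.82$; hence with $P^{n-1}$-probability at least $1-1/n$ one has $f(y) < 1 + \log 0.82$, which is exactly \eqref{lowering}. The main obstacle is purely quantitative: the slack $\beta \approx 0.0036$ is genuinely small, so the Stirling correction in $\mathbb{E} f$ and the Lévy radius must each be driven below a few thousandths. This forces the threshold $n_0$ implicit in ``$n$ sufficiently high'' to be moderately large, but no new idea is required beyond the standard concentration machinery.
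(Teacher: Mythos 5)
Your proof is correct and follows essentially the same route as the paper: compute $E\bigl(n^{-1/2}\|y\|_1\bigr)$ (you get the identical Gamma-function ratio, via the coordinate marginal rather than the paper's Gaussian-integral/polar-coordinates trick) and then invoke spherical concentration of measure with a radius $t_n\to 0$ chosen so the exceptional probability is $1/n$. The only cosmetic difference is that you state L\'evy's inequality around the mean of the $1$-Lipschitz function $n^{-1/2}\|y\|_1$, whereas the paper works with the median of $\|y\|_1$ and separately bounds the mean--median gap by $\pi/2$; both correctly exploit the positive slack $1+\log 0.82-\sqrt{2/\pi}>0$.
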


The proof consists in computing the expectation of
 $n^{- 1/2}\|y\|_1$
over
$\mathbb{S}^{n-1}_2$ (a standard calculation) and then using the
known fact
that typically $n^{- 1/2}\|y\|_1$ is very close to its mean, provided
$n$ is large enough. Details follow.

 Recall that the area of $\mathbb{S}^{n-1}_2$ is
$|\mathbb{S}^{n-1}_2|= 2\pi^{n/2}/\Gamma(n/2)$, and
that $\|y\|_2 \le \|y\|_1\le \sqrt{n} \|y\|_2$. It turns out that
the average of $\|\cdot\|_1$ over $\mathbb{S}^{n-1}_2$ is closer to $\sqrt{n}$ than to 1.

\begin{lemma}\label{lemma1}   The expectation  of $\|\cdot\|_1$ over $\mathbb{S}_2^{n-1}$ is given by
\begin{equation}\label{mean1}
E \left( \|y\|_1\right) :=
\int_{\mathbb{S}_2^{n-1}}  \|y\|_1 d P^{n-1}(y)
=
\frac{n \Gamma\left(\frac{n}{2} \right)}
{\pi^{1/2}\Gamma\left(\frac{n + 1}{2}\right)}.
\end{equation}
Thus,
\begin{equation}\label{downbupb}
\sqrt{\frac{2}{\pi}}
\le
E \left(\frac{ \|y\|_1}{\sqrt{n}}\right)
\le
\left(\frac{n}{n - 1}\right)^{1/2} \sqrt{\frac{2}{\pi}}.
\end{equation}
\end{lemma}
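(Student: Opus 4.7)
The plan is to evaluate (\ref{mean1}) by reducing to a single coordinate and then invoking the Gaussian realization of the uniform measure on the sphere, and finally to extract (\ref{downbupb}) from classical log-convexity estimates for $\Gamma$.

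First, since $P^{n-1}$ is invariant under coordinate permutations, $E(\|y\|_1) = n\,E(|y_1|)$, so it suffices to evaluate $E(|y_1|)$. For this I would use the fact that if $Z=(Z_1,\dots,Z_n)$ is a standard Gaussian vector in $\mathbb{R}^n$, then $Z/\|Z\|_2$ is $P^{n-1}$-distributed and independent of the radius $\|Z\|_2$. Writing $|Z_1| = \|Z\|_2\cdot|Z_1|/\|Z\|_2$ and taking expectations gives $E(|Z_1|) = E(\|Z\|_2)\,E(|y_1|)$. The left-hand side is the classical $\sqrt{2/\pi}$, and $E(\|Z\|_2)$ is the mean of a chi distribution with $n$ degrees of freedom, namely $\sqrt{2}\,\Gamma((n+1)/2)/\Gamma(n/2)$. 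Solving for $E(|y_1|)$ and multiplying by $n$ delivers formula (\ref{mean1}). (Equivalently, one can integrate $|y_1|$ directly by slicing the sphere with the hyperplanes $y_1=\mathrm{const}$ and performing the resulting Beta-function computation.)

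For the two-sided estimate (\ref{downbupb}), dividing (\ref{mean1}) by $\sqrt{n}$ converts the claim into
\[
\sqrt{(n-1)/2}\;\le\;\Gamma((n+1)/2)/\Gamma(n/2)\;\le\;\sqrt{n/2}.
\]
With $x = n/2$, both inequalities follow from the log-convexity of $\Gamma$ (Bohr--Mollerup). The upper bound is the familiar $\Gamma(x+1/2)^2 \le \Gamma(x)\,\Gamma(x+1) = x\,\Gamma(x)^2$. For the lower bound, the same convexity applied at $x-1/2,\,x,\,x+1/2$ gives $\Gamma(x)^2 \le \Gamma(x-1/2)\,\Gamma(x+1/2)$, hence $\Gamma(x+1/2)/\Gamma(x) \ge \Gamma(x)/\Gamma(x-1/2)$; multiplying this by itself and using the telescoping identity $\Gamma(x+1/2)/\Gamma(x-1/2) = x-1/2$ yields $(\Gamma(x+1/2)/\Gamma(x))^2 \ge x-1/2$, as needed.

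The argument is essentially bookkeeping once the Gaussian device is brought in; the only mildly delicate step is squeezing the sharp lower bound on the Gamma ratio out of log-convexity, since a naive Stirling expansion would give the correct leading order but a less clean finite-$n$ constant than the factor $\sqrt{n/(n-1)}$ appearing in (\ref{downbupb}).
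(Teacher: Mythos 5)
Your proposal is correct and follows essentially the same route as the paper: the Gaussian device (independence of $Z/\|Z\|_2$ and $\|Z\|_2$) is just the probabilistic phrasing of the paper's computation, which integrates $\|y\|_1 e^{-\|y\|_2^2}$ over $\mathbb{R}^n$ once in polar coordinates and once by Fubini, and both reduce (\ref{downbupb}) to the same two-sided bound on $\Gamma\left(\frac{n+1}{2}\right)/\Gamma\left(\frac{n}{2}\right)$. The only difference is that you derive that Gamma-ratio estimate from log-convexity explicitly, whereas the paper cites it as a known exercise; your derivation is correct.
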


 \begin{proof}
 We integrate the left hand side of (\ref{integr}) below
in two ways, first in   polar coordinates
and then as a product, via Fubini's Theorem.  Using $|\mathbb{S}^{n-1}_2|= 2\pi^{n/2}/\Gamma(n/2)$ and the fact that $\|\cdot\|_1$ is a positive homogeneous function
of degree one, we get
\begin{equation}\label{integr}
\int_{\mathbb{R}^n}  \|y\|_1
e^{ -  \|y\|_2^2 } dy
 =
\int_0^\infty t^{n} e^{- t^2}  dt
\left(\frac{2\pi^{n/2}}{\Gamma\left(\frac{n}{2}\right)}\right)
\int_{\{\|y\|_2 = 1\}} \|y\|_1
 dP^{n-1}(y).
\end{equation}
Given $y = (y_1,\dots,y_n)\in \mathbb{R}^{n}$, we denote by
$\hat{y}_i\in \mathbb{R}^{n-1}$ the vector
obtained from $y$ by deleting the $i$-th coordinate:
$\hat{y}_i=(y_1, \dots, y_{i-1}, y_{i+1},\dots, y_n)$.
Now the first two integrals in (\ref{integr}) can either be
computed or expressed in terms of the Gamma function:
\begin{equation}\label{int1}
\int_0^\infty t^{n} e^{- t^2}  dt = \frac12\Gamma\left(\frac{n + 1}{2}\right),
\end{equation}
and
\begin{equation}\label{int2}
\int_{\mathbb{R}^n}  \left(\sum_{i=1}^n |y_i|  \right)
\exp\left( - \sum_{i=1}^n y^2_i \right)  dy
 =  \sum_{i=1}^n 2 \int_0^\infty y_i e^{- y_i^2}  dy_i
 \int_{\mathbb{R}^{n-1}} \exp\left( - \|\hat{y}_i\|_2^2 \right) d \hat{y}_i
=  n \pi^{(n-1)/2}.
 \end{equation}
Putting together (\ref{integr}), (\ref{int1}) and (\ref{int2}), and solving
for the expectation, we get
\begin{equation}\label{expect}
E \left( \|y\|_1\right)
=
\frac{n \Gamma\left(\frac{n}{2} \right)}
{\pi^{1/2}\Gamma\left(\frac{n + 1}{2}\right)}.
 \end{equation}
Now (\ref{downbupb}) follows from (\ref{expect}) by using the following known
 and elementary
estimate (cf. Exercise 5, pg. 216 of \cite{Web}; the result is an
immediate consequence of the log-convexity of the $\Gamma$ function):\begin{equation}\label{ratio}
\sqrt{\frac{2}{n}}
\le
\frac{\Gamma \left(\frac{n}{2}\right)}{\Gamma \left(\frac{n + 1}{2}\right)}
\le
\sqrt{\frac{2}{n-1}}.
\end{equation}
\end{proof}

An expression similar to (\ref{downbupb}) can also be obtained
from (\ref{expect}) by using the very well known asymptotic expansion
\begin{equation*}
\Gamma (z) = e^{-z} z^{z - 1/2} \sqrt{2\pi} \left(1 + \frac{1}{12 z}
+ O(z^{-2})\right).
\end{equation*}

Given a real valued random variable $f$ on a
probability space $(X, \mu)$, a median $M_f$ of $f$
is a constant such that $\mu\{f\ge M_f\} \ge 1/2$ and
$\mu\{f\le M_f\} \ge 1/2$. It is a well known fact   that ``reasonable functions" on $\mathbb{S}_2^{n-1}$,
when observed at the right scale, exhibit the concentration
of measure phenomenon. That is, they are almost constant over large
portions of the sphere,
taking values very close to their medians.
In the particular case of
$\|\cdot\|_1$, the right scale means dividing by
$\sqrt{n}$, which is precisely what we have in the
right hand side of (\ref{r2simp}). We use the following facts (they
can be found in \cite{MiSch}, within
the proof of the Lemma in pg. 19):

1) $\left|E \left( \|y\|_1\right) - M_{\|y\|_1}\right|\le \pi/2$.

2) $P^{n-1}\{\left| M_{\|y\|_1} - \|y\|_1 \right| > t \}
\le
\sqrt{\frac{\pi}{2}} e^{-t^2/2}.$

\vskip .2 cm

{\em Proof of Theorem \ref{typical}.} Let $t= \sqrt{\log ((\pi/2) n^2)}$.   By 1),
$$
\{\left| E\left(\|y\|_1\right) - \|y\|_1 \right| > t  + \pi /2\}
\subset
\{\left|\|y\|_1 - M_{\|y\|_1}\right| > t\}.
$$
By the preceding inclusion and 2),
$$P^{n-1}\{ \|y\|_1 > t + \pi/2 + E\left(\|y\|_1\right) \}
 \le
P^{n-1}\{ |E\left(\|y\|_1\right) - \|y\|_1 |  > t + \pi/2\}
$$
$$
\le
P^{n-1}\{\left|\|y\|_1 - M_{\|y\|_1}\right| > t\}\le 1/n.
 $$
It follows from (\ref{downbupb}) and the previous bound that  for all $n$ sufficiently  high,
$$
\|y\|_1 \le t + \frac{\pi}{2} + \left(\frac{n}{n - 1}\right)^{1/2} \sqrt{\frac{2 n}{\pi}},
$$
with probability at least $1-1/n$.
Since
$n^{- 1/2} \left(t + \pi/2 \right) = O(n^{- 1/4})$
and $\left(n/(n - 1)\right)^{1/2}= 1 + O(n^{- 1})$,
again for all $n$ sufficiently  high and with probability at least $1-1/n$
we have
\begin{equation}
\exp\left(n^{- 1/2} \|y\|_1 -1 \right)
\le
\exp\left(\sqrt{\frac{2}{\pi}}\left(1 + O\left(\frac{1}{n^{1/4}}\right)\right) -1 \right)
< 0.82.
\end{equation}
\qed

\vskip .2 cm

In some  unlikely cases  (that is, with low
 $P^{n-1}$-probability) the refined AM-GM inequality  (\ref{refAMGM})
performs badly. Suppose $n >> 1$, and
let  $0 < y_1= \dots = y_n$. Then both sides of
(\ref{r2}) equal 1. Letting $y_1\downarrow 0$,   the left hand side drops to zero, while the right hand side remains
essentially unchanged.

Finally, given any correction factor in a refinement of the AM-GM inequality, how far down could it
go on some large set? Not lower than $1/2$. Actually,
it is possible to be more precise: In \cite{GluMi} E. Gluskin and V. Milman show that
$0.394 <n^{1/2} \prod_{i=1}^n |y_i|^{1/n}$ asymptotically in $n$, with
probability approaching $1$, and their method can be used to prove  that
$n^{1/2} \prod_{i=1}^n |y_i|^{1/n}$
concentrates around the value $\sqrt{2} \exp\left[\Gamma^\prime(1/2)/(2\Gamma(1/2))\right]\approx 0.529$
(cf. \cite[Theorem 2.8]{Al2}).

\end{document}